\newtheorem{thm}{Theorem}
\newtheorem{lem}{Lemma}
\newtheorem{cor}{Corollary}
\newtheorem{conj}{Conjecture}
\theoremstyle{remark}
\newtheorem{eg}{Example}
\newcommand{\acknowledgements}[1]{\section*{Acknowledgements} #1 }					
\newcommand{\Rep}{\mathbf{Rep}}														
\newcommand{\C}{\mathbb{C}}															
\newcommand{\E}{\mathbb{E}}															
\newcommand{\Q}{\mathbb{Q}}															
\newcommand{\R}{\mathbb{R}}															
\newcommand{\GL}{\mathrm{GL}}														
\newcommand{\curly}{\mathcal}														
\newcommand{\HH}{\mathrm{H}}														
\newcommand{\G}{\mathbf{G}}															
\newcommand{\A}{\mathbb{A}}															
\newcommand{\Z}{\mathbb{Z}}															
\renewcommand{\H}{\mathbb{H}}														
\newcommand{\w}{\mathrm{w}}															
\newcommand{\intersect}{\cap}														
\newcommand{\kp}{\vdash}															
\renewcommand{\hat}{\widehat}														
\newcommand{\Dirsum}{\bigoplus}														
\newcommand{\hmtpc}{\simeq}															
\newcommand{\isom}{\cong}															
\newcommand{\homeo}{\approx}														
\newcommand{\til}{\widetilde}														
\DeclareMathOperator{\Hom}{Hom}														
\DeclareMathOperator{\Ext}{Ext}														
\DeclareMathOperator{\Mut}{Mut}														
\newcommand{\codim}{\mathrm{codim}}													
\newcommand{\Tr}{\mathrm{tr}}														
\newcommand{\compose}{\circ}														
\newcommand{\bfs}[1]{\ensuremath\mathbf{#1}}										
\newcommand{\dprod}{\displaystyle \mathop{\prod}^{\curvearrowright}}				
\title[q.~dilog.~identities for $n$-cycles]{Quantum dilogarithm identities for $n$-cycle quivers}
\author[J.~Allman]{Justin Allman}
\address{Department of Mathematics, U.S.~Naval Academy, Annapolis, MD, USA}
\email{allman@usna.edu}
\keywords{%
quantum dilogarithm, quiver representation, maximal green sequence
}
\begin{document}

\begin{abstract}
We prove quantum dilogarithm identities for $n$-cycle quivers. By the combinatorial approach of Keller, each side of our identity determines a maximal green sequence of quiver mutations. Thus we interpret our identities as factorizations of the refined Donaldson--Thomas invariant for the quiver with potential. Finally, we conjecture an upper bound on the possible lengths of maximal green sequences.
\end{abstract}

\maketitle


\section{Introduction}
\label{s:intro}

Quantum dilogarithm identities are remarkable equalities of non-commutative power series which have several interpretations in mathematics and physics. In combinatorics they encode partition counting identities akin to the Durfee's square recursion (e.g.~\cite{rraway2018}). In physics, they are quantum versions of identities in supersymmetric gauge theories (e.g.~\cite{lfrk1994}). In geometry, they represent wall-crossing formulas for stability conditions and relate Poincar\'e series in cohomological Hall algebras (e.g.~\cite{mkys2011}). Keller initiated the study of the so-called \emph{refined Donaldson--Thomas invariant} (henceforth DT-invariant), which is the common value of these equalities, by a combinatorial algorithm called \emph{maximal green sequences} of quiver mutations \cite{bk2013.fpsac}. Such sequences were known to physicists as ``finite chambers'' for BPS spectra in supersymmetric quantum conformal field theories. Recent progress has been made in classifying maximal green sequences. For example, the ``No Gap Conjecture'' of \cite{tbgdmp2014} has been established for tame hereditary algebras, and hence for quivers which are acyclic orientations of simply-laced extended Dynkin diagrams \cite{shki2019}. Furthermore, results on the lower bounds for lengths of maximal green sequences in certain types have appeared, e.g.~\cite{agtmks2017,agtmks2018}. 

In this note we continue a program of studying quantum dilogarithm identities and the associated DT-invariants via topology, see \cite{rr2013,jarr2018}. Whereas maximal green sequences produce implicit quantum dilogarithm identities (via an algorithm), our identities are explicitly determined. On one hand, we study identities of Betti numbers in different stratifications of the quiver's representation space. On the other, we establish identities by dimension counting arguments in the associated quantum algebra. We seek to consider a first approach to the non-acyclic case. As such, we study quivers with \emph{potential} (which is a formal polynomial of cyclic paths unique up to cyclic permutation), and where the topological arguments are complicated by \emph{rapid decay cohomology}. 

We study the specific case of the $n$-cycle quiver. Let $\Gamma_n$ denote the quiver below where we set $a_i$ to be the arrow whose head is the vertex $i$, and to which we assign the potential $W = -a_1a_2\cdots a_n$ throughout the rest of the paper.
	\begin{equation}
		\label{eqn:Cn.picture}
	\vcenter{\hbox{
	\begin{tikzpicture}
	\node (1) at (0,0) {$1$};
	\node (2) at (1.25,0) {$2$};
	\node (dots) at (2.75,0) {$\cdots$};
	\node (n) at (4.25,0) {$n$};
	
	\draw[->] (n) -- (dots);
	\draw[->] (dots) -- (2);
	\draw[->] (2) -- (1);
	\draw[->, domain=0.25:4] plot (\x, {.2*\x*(\x-4.25)});
	\end{tikzpicture}
	}}\nonumber
	\end{equation}

\section{Quiver preliminaries}
\label{s:quiver.preliminaries}
A quiver $Q$ is a directed graph with vertices $Q_0$ and edges $Q_1$ called \emph{arrows}. Every $a\in Q_1$ has a \emph{tail}, $ta$, and \emph{head}, $ha$, in $Q_0$. Given a \emph{dimension vector} $\gamma = (\gamma(i))_{i\in Q_0}$ of non-negative integers, we associate the \emph{representation space}, $\Rep_\gamma(Q)$, and a group, $\G_\gamma$, as follows
\begin{equation}
\label{eqn:defn.repspace.and.G}
{\textstyle \Rep_\gamma(Q) = \Dirsum_{a\in Q_1} \Hom(\C^{\gamma(ta)},\C^{\gamma(ha)}),
	\quad\quad
\G_\gamma = \prod_{i\in Q_0} \GL(\gamma(i),\C).} \nonumber
\end{equation}
The group $\G_\gamma$ acts on $\Rep_\gamma(Q)$ via $(g_i)_{i\in Q_0} \cdot (X_a)_{a\in Q_1} = (g_{ha} X_a g_{ta}^{-1})_{a\in Q_1}$. Let $e_i$ denote the dimension vector with a $1$ at vertex $i\in Q_0$ and zeroes elsewhere. Then for every quiver, we define the $\Z$-bilinear anti-symmetric form $\lambda$ by
\begin{equation}
\label{eqn:lambda.defn}
\lambda(e_i,e_j) = \#\{\text{arrows~} a:i\to j\} - \#\{\text{arrows~} a:j \to i\}
\nonumber
\end{equation}
and extending linearly to all dimension vectors. This is the opposite antisymmetrization of the Euler form. Further, fix an indeterminate $q^{1/2}$. The \emph{quantum algebra} $\A_Q$ is the $\Q(q^{1/2})$-algebra with underlying vector space spanned by the symbols $y_\gamma$, for every dimension vector $\gamma$, subject to the relation that for any two dimension vectors $\gamma_1$ and $\gamma_2$, we have $y_{\gamma_1+\gamma_2} = -q^{-\frac{1}{2}\lambda(\gamma_1,\gamma_2)}y_{\gamma_1}y_{\gamma_2}$, which implies the relation $y_{\gamma_2}y_{\gamma_1} = q^{\lambda(\gamma_1,\gamma_2)}y_{\gamma_1}y_{\gamma_2}$. 
We let $\hat{\A}_Q$ denote the completion of this algebra allowing formal power series in the variables $y_\gamma$.

\subsection{Dynkin quivers}
A \emph{Dynkin quiver} is an orientation of a simply-laced Dynkin diagram, i.e.~of type $A$, $D$, or $E$. In this paper, we need only so-called \emph{equioriented} $A_n$ quivers 
	\begin{equation}
	\vcenter{\hbox{
	\begin{tikzpicture}
	\node (1) at (0,0) {$1$};
	\node (2) at (1.25,0) {$2$};
	\node (dots) at (2.75,0) {$\cdots$};
	\node (n) at (4.25,0) {$n$};
	
	\draw[->] (n) -- (dots);
	\draw[->] (dots) -- (2);
	\draw[->] (2) -- (1);
	\end{tikzpicture}
	}}\nonumber
	\end{equation}
and so henceforth the symbol $A_n$ denotes the quiver above. The simple roots of $A_n$ can be identified with the dimension vectors $e_i$, and we denote by $\Phi(A_n)$ the corresponding set of positive roots. Each $\beta\in\Phi(A_n)$ is realized as a dimension vector for $Q$ and is identified with the interval of positive integers $[k_1,k_2]$ since it can be written uniquely in the form $\beta = \sum_{i=k_1}^{k_2} e_i$ for some $1\leq k_1 \leq k_2 \leq n$.  Hence $|\Phi(A_n)| = \frac{1}{2}n(n+1) = \binom{n+1}{2}$. We let $\beta_0 = [1,n]$ denote the longest root.

For a fixed dimension vector $\gamma$, the work of Gabriel implies that $\Rep_\gamma(Q)$ admits finitely many $\G_\gamma$-orbits if and only if $Q$ is {Dynkin} \cite{pg1972}. Explicitly, Dynkin orbits are in bijection with lists $m=(m_\beta)_{\beta\in\Phi(A_n)}$ of non-negative integers such that $\gamma = \sum_{\beta\in\Phi(A_n)} m_\beta\,\beta$. We call $m$ a \emph{Kostant partition} of $\gamma$ and write $m\vdash\gamma$. Let $\curly{O}_m$ denote the corresponding $\G_\gamma$-orbit. 

\subsection{Counting Betti numbers of orbits} 
Stabilizers of points in $\curly{O}_m$ are conjugate, and hence isomorphic. Let $\G_m$ denote the isomorphism type of such a stabilizer. A standard argument in equivariant cohomology gives an isomorphism $\HH^*_{\G_\gamma}(\curly{O}_m) \isom \HH^*_{\G_m}(pt) = \HH^*(B\G_m)$ where the last equality uses the definition of the so-called \emph{Borel construction}. Here and throughout the paper, all cohomologies have coefficients in $\R$.

The \emph{Poincar\'e series} of a cohomology algebra $H^*$ is $\curly{P}[H^*] = \sum_{k\geq 0} q^{k/2} h_k$ where $h_k=\dim_\R(H^k)$ is the $k$-th Betti number. We set $\curly{P}_d := \curly{P}[\HH^*(B\GL(d,\C))]$. Now $\HH^*(B\GL(d,\C))$ is a polynomial ring in the Chern classes $c_i$ of $\GL(d,\C)$, with $\deg(c_i) = 2i$, whence
\begin{equation}
	\label{eqn:curly.P.d}
	\curly{P}_d = \sum_{k\geq 0} q^k \dim_\R(\HH^{2k}(B\GL(d,\C))) = \prod_{j=1}^d \frac{1}{1-q^j}.
	\nonumber
\end{equation}
Finally, up to homotopy we have that $\G_m \hmtpc \prod_{\beta\in\Phi(A_n)} \GL(m_\beta,\C)$ \cite[Proposition~3.6]{lfrr2002.duke}, and so putting everything together, the K\"unneth isomorphism further implies that 
\begin{equation}
	\label{eqn:poincare.orbit}
	\curly{P}(m) := \curly{P}[\HH^*_{\G_\gamma}(\curly{O}_m)] = \prod_{\beta\in\Phi(A_n)} \curly{P}_{m_\beta}.
\end{equation}

\subsection{$n$-cycle quivers and potentials}
Henceforth assume $n\geq 3$. $\Gamma_n$ admits infinitely many quiver orbits (it is not Dynkin); however, we give two natural methods to stratify $\Rep_\gamma(\Gamma_n)$ into finitely many $\G_\gamma$-stable subvarieties. First, given a Kostant partition $m\kp\gamma$ for the $A_n$ subquiver in $\Gamma_n$, we define the subvariety
	\begin{equation}
		\label{eqn:defn.w.linear.stratum}
		\Sigma^1_m = 
			\left\{ 
			(X_{a_i})_{i=1}^n \in \Rep_\gamma(\Gamma_n) :
				(X_{a_1},\ldots,X_{a_{n-1}}) \in \curly{O}_m \subset \Rep_\gamma(A_n) 
			\right\}.
	\end{equation}
Further, for any integer $1 \leq \ell < n$, write $j = n-\ell$. We rename the vertices and arrows of $\Gamma_n$ according to the picture
	\begin{equation}
		\label{eqn:Cn.w.quad.picture}
		\vcenter{\hbox{
		\begin{tikzpicture}
		\node (1) at (0,1) {$1$};
		\node (topdots) at (2,1) {$\cdots$};
		\node (ell) at (4,1) {$\ell$};
		
		\node (1') at (3.5,0) {$1'$};
		\node (botdots) at (2,0) {$\cdots$};
		\node (j') at (.5,0) {$j'$};
		
		\node at (.95,1.25) {$a_1$};
		\node at (3.25,1.25) {$a_{\ell-1}$};
		\node at (1.45,-.25) {$a_{j'-1}$};
		\node at (2.9,-.25) {$a_{1'}$};
		
		\node at (4.25,.5) {$b$};
		\node at (-.25,.5) {$a$};
	
		\draw[->] (ell) -- (topdots);
		\draw[->] (topdots) -- (1);
		
		\draw[->] (j') -- (botdots);
		\draw[->] (botdots) -- (1');

		\draw[->] (1) -- (j');
		\draw[->] (1') -- (ell);
		\end{tikzpicture}
		}}
	\end{equation}
Notice $A_\ell$ is the subquiver along the top row and $A_{j}$ is the subquiver along the bottom row. We respectively denote the restriction of a dimension vector $\gamma$ to these subquivers by $\gamma|A_\ell$ and $\gamma|A_{j}$. Given Kostant partitions $m \kp \gamma|A_\ell$ and $m'\kp\gamma|A_{j}$ we define
	\begin{equation}
		\label{eqn:defn.w.quad.stratum}
		\Sigma^2_{m,m'} = 
			\left\{
			(X_{a_i})_{i=1}^n  \in \Rep_\gamma(\Gamma_n) :
			\begin{array}{ll}
				(X_{a_1},\ldots,X_{a_{\ell-1}}) \in \curly{O}_m \subset \Rep_{\gamma|A_\ell}(A_\ell)
				 \\
				(X_{a_{1'}},\ldots,X_{a_{j'-1}}) \in \curly{O}_{m'} \subset \Rep_{\gamma|A_{j}}(A_j)	
			\end{array}		
			\right\}.
	\end{equation}
Observe that for every $\gamma$ and every $\ell$, we have
\begin{equation}
	\label{eqn:stratifications}
{ \bigsqcup_{m \kp \gamma} \Sigma^1_m = \Rep_\gamma(\Gamma_n) = \bigsqcup_{m\kp \gamma|A_\ell,\,m'\kp \gamma|A_{j}} \Sigma^2_{m,m'}}
\end{equation}
We remark that $\Sigma^1_m$ and $\Sigma^2_{m,m'}$ coincide for $\ell = n$, but we still choose to make a formal distinction. Corresponding to the potential $W$ we define the regular function 
\begin{gather*}
	\label{eqn:Cn.superpotential}
	W_\gamma:\Rep_\gamma(Q) \to \C \\
	(X_{a_1} ,\ldots, X_{a_n} )
		{\longmapsto}
	-\Tr( X_{a_1} \compose \cdots \compose X_{a_n} )
\end{gather*}
which makes sense because $X_{a_1} \compose \cdots \compose X_{a_n} \in \Hom(\C^{\gamma(1)},\C^{\gamma(1)})$ and is well-defined up to cyclic permutation of the arrows. Moreover, we note that $W_\gamma$ is $\G_\gamma$-invariant. 

Finally, we call the strata defined by \eqref{eqn:defn.w.linear.stratum} \emph{$W$-linear strata} and those defined by \eqref{eqn:defn.w.quad.stratum} \emph{$W$-quadratic strata}. 

\section{Ordering roots}
\label{s:ordering.roots}
We now define a total order on the roots of $A_n$ via the \emph{Auslander--Reiten ($AR$) quiver}; we refer the reader to \cite{rs2014} for more details on the general theory. The quiver $AR(Q)$ has the indecomposable quiver representations as its vertices, which by Gabriel's theorem correspond to positive roots in Dynkin type. Below we display $AR(A_n)$
\begin{equation}
	\label{eqn:AR.An}
	\vcenter{\hbox{
	\begin{tikzpicture}
	\node (nn) at (1,-1) {$[n,n]$};
	
	\node (11) at (7,-1) {$[1,1]$};
	\node (22) at (5,-1) {$[2,2]$};
	\node (12) at (6,0) {$[1,2]$};
	
	\node (1n) at (4,2) {$[1,n]$};
	\node (2n) at (3,1) {$[2,n]$};
	
	\node (a) at (2,0) {$\iddots$};
	\node (b) at (3,-1) {$\cdots$};
	\node (d) at (4,0) {$\ddots$};
	\node (e) at (5,1) {$\ddots$};
	
	\draw[->] (11) -- (12);
	\draw[->] (12) -- (22);
	\draw[->] (e) -- (1n);
	\draw[->] (d) -- (2n);
	\draw[->] (1n) -- (2n);
	\draw[->] (a) -- (nn);
	\draw[->] (2n) -- (a);
	\draw[->] (12) -- (e);
	\draw[->] (22) -- (d);
	\end{tikzpicture}
	}}
\end{equation}
%
%
We do not need the arrow information from $AR(Q)$ in this paper, so henceforth we neglect to draw the edges. We observe that for roots $\beta$ and $\beta'$ in the same column of \eqref{eqn:AR.An}, $\lambda(\beta,\beta') = 0$. Hence $y_\beta$ and $y_{\beta'}$ commute in $\hat{\A}_Q$. In general, when $\beta$ appears to the left of $\beta'$, we have $\lambda(\beta,\beta') \geq 0 $.  We seek to impose a total order on $\Phi(A_n)$ which exploits this structure in the quantum algebra. Following \cite{mr2010,rr2013,jarr2018} we impose the rule that
\begin{equation}
	\label{eqn:Reineke.order}
	\beta\prec\beta' \implies \lambda(\beta,\beta') \geq 0.
\end{equation}
Hence one allowed ordering is given by reading from left to right in \eqref{eqn:AR.An}, with any order allowed on roots from the same column. We call this a \emph{Reineke order} on $\Phi(A_n)$.

We remark that the condition $\lambda(\beta,\beta')\geq 0$ is equivalent (in an appropriate sense) to the requirement that $\Hom(M_\beta,M_{\beta'}) = 0 = \Ext(M_{\beta'},M_\beta)$, where $M_\alpha$ denotes the indecomposable quiver representation corresponding to the root $\alpha$, in Reineke's original work \cite[Section~6.2]{mr2010}. This equivalence is proved in the author's joint work with Rim\'anyi \cite[Lemma~5.1]{jarr2018}.

Now, set $\Phi^1 = \Phi(A_n)\setminus\{\beta_0\}$ which we associate with the lefthand side of \eqref{eqn:stratifications}. We give $\Phi^1$ a Reineke order, only forgetting the longest root $\beta_0 = [1,n]$.

For $1\leq \ell < n$, set $\Phi^2_\ell = \Phi(A_\ell) \sqcup \Phi(A_j)$ which we associate with the righthand side of \eqref{eqn:stratifications}. We order $\Phi^2_\ell$ as follows. Roots from $\Phi(A_\ell)$ (and $\Phi(A_j)$) appear in Reineke order. On the other hand, if one of $\beta$ or $\beta'$ is in $\Phi(A_\ell)$ and the other is in $\Phi(A_j)$, then
\begin{equation}
	\label{eqn:neg.order.rule}
	\beta\prec\beta' \implies \lambda(\beta,\beta')\leq 0.
\end{equation}
We observe that an allowed ordering is achieved by aligning the $AR$ quivers of $A_\ell$ and $A_j$ along columns so that they are centered one over the other; i.e.~so that the longest roots $[1,\ell]$ and $[1',j']$ appear in the same column. One checks that again taking columns left to right yields an ordering satisfying \eqref{eqn:Reineke.order} and \eqref{eqn:neg.order.rule}.

\begin{eg}
	\label{eg:A2.A3}
Let $n=5$ and $\ell=3$ (hence $j=2$). Then we have the diagram of $AR$ quivers
\begin{center}
	\begin{tikzpicture}

	\node (33) at (-.5,.5) {$[3,3]$};
	\node (22) at (2,.5) {$[2,2]$};
	\node (11) at (4.5,.5) {$[1,1]$};
	
	\node (12) at (3.25,1) {$[1,2]$};
	\node (23) at (.75,1) {$[2,3]$};
	\node (13) at (2,1.5) {$[1,3]$};
	
	\node (A3) at (-2,1) {${AR}(A_3):$};
	\node (A2) at (-2,-.75) {${AR}(A_2):$};

	\node (2'2') at (.75,-1.2) {$[2',2']$};
	\node (1'1') at (3.25,-1.2) {$[1',1']$};
	\node (1'2') at (2,-.3) {$[1',2']$};

	\draw[dashed] (2.625,-1.5) -- (2.625,1.8);
	\draw[dashed] (1.375,-1.5) -- (1.375,1.8);
	\draw[dashed] (.125,-1.5) -- (.125,1.8);
	\draw[dashed] (3.875,-1.5) -- (3.875,1.8);
	
	\draw[dashed] (-3.5,0.1) -- (5.5,0.1);
	
	\end{tikzpicture}

\end{center}
Which gives the following allowed total ordering on $\Phi^2_3$
	\[
	[3,3]\prec \underbrace{[2',2'] \prec [2,3]} \prec \underbrace{[1',2'] \prec [2,2] \prec [1,3]} \prec \underbrace{[1',1'] \prec [1,2]} \prec [1,1].
	\]
The braces correspond to sets which commute in $\hat{\A}_{\Gamma_5}$. Within braces, the roots can be permuted to produce another allowed ordering.
\end{eg}

\section{Statement of the main theorem}
\label{s:main.thm}

Given a variable $z$, the \emph{quantum dilogarithm series} $\E(z) \in \Q(q^{1/2})[[z]]$ is
	\begin{equation}
		\label{eqn:E.defn}
		\E(z) 	= 1 + \sum_{d\geq 1} \frac{(-z)^d\, q^{d^2/2}}{\prod_{k=1}^d (1-q^k)} 
				= \sum_{d\geq 0} (-z)^d\,q^{d^2/2}\,\curly{P}_d.
	\end{equation}

\begin{thm}
\label{thm:main}
Let $n\geq 3$, $1\leq \ell <n$, and $j=n-\ell$. In the completed quantum algebra $\hat{\A}_{\Gamma_n}$, we have the following quantum dilogarithm identity
	\begin{equation}
		\label{eqn:main}
		{\textstyle \dprod_{\phi \in \Phi^1} \E(y_\phi) 
			= \dprod_{\psi \in \Phi^2_\ell} \E(y_\psi)}
	\end{equation}
where the arrows indicate the products are taken in the orders specified in Section \ref{s:ordering.roots}.
\end{thm}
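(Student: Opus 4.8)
The plan is to interpret each side of \eqref{eqn:main} as the $\G_\gamma$-equivariant rapid decay cohomology of the pair $(\Rep_\gamma(\Gamma_n), W_\gamma)$, computed through the two stratifications in \eqref{eqn:stratifications}, and then to match the results coefficient-by-coefficient in $y_\gamma$. Expanding a product $\dprod_\beta \E(y_\beta)$ by means of \eqref{eqn:E.defn} and the twisted multiplication of $\hat{\A}_{\Gamma_n}$, the coefficient of a fixed $y_\gamma$ is a signed, $q$-weighted sum of the orbit Poincar\'e series $\curly{P}(m)$ from \eqref{eqn:poincare.orbit}, where the $q$-exponent records the self-terms $d_\beta^2/2$ together with the $\lambda$-twists $\lambda(\beta,\beta')d_\beta d_{\beta'}$ forced by the chosen order. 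Thus it suffices to show that both stratifications assemble these orbit contributions into the same generating series, namely the Poincar\'e series of the rapid decay cohomology of $W_\gamma$ (up to a universal normalization depending only on $\gamma$). First I would fix the general formalism: the filtration of $\Rep_\gamma(\Gamma_n)$ by closures of strata induces a $\G_\gamma$-equivariant spectral sequence, and because each stratum is equivariantly a product of a single $\G_\gamma$-orbit with an affine space, its equivariant cohomology is concentrated in even degrees by \eqref{eqn:poincare.orbit}; I therefore expect the spectral sequence to degenerate and Poincar\'e series to add, each stratum weighted by a $q$-power recording its codimension and the behaviour of $W_\gamma$ along its free directions.

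Next I would treat the $W$-linear side \eqref{eqn:defn.w.linear.stratum}. Over $\Sigma^1_m$ the arrows $X_{a_1},\ldots,X_{a_{n-1}}$ are frozen in the orbit $\curly{O}_m$ while $X_{a_n}$ is free, and $W_\gamma = -\Tr(X_{a_1}\compose\cdots\compose X_{a_{n-1}}\compose X_{a_n})$ is a linear functional in $X_{a_n}$. The crucial observation is that this functional is identically zero precisely when the composite $X_{a_1}\compose\cdots\compose X_{a_{n-1}}$ vanishes, which in terms of the Kostant partition happens if and only if $m_{\beta_0}=0$: a nonzero composite $\C^{\gamma(n)}\to\C^{\gamma(1)}$ requires an indecomposable summand supported on the full interval $\beta_0 = [1,n]$. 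When $m_{\beta_0}>0$ the restriction of $W_\gamma$ is linear and nonzero, so its rapid decay cohomology vanishes and the stratum contributes nothing; when $m_{\beta_0}=0$ the potential is identically zero along the $X_{a_n}$-fiber and the stratum contributes $\curly{P}(m)$ together with a pure $q$-shift from its codimension. Summing over the Kostant partitions with $m_{\beta_0}=0$ is exactly the expansion of $\dprod_{\phi\in\Phi^1}\E(y_\phi)$, which explains the removal of $\beta_0$ from $\Phi^1$.

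I would then treat the $W$-quadratic side \eqref{eqn:defn.w.quad.stratum}. Writing $P = X_{a_1}\compose\cdots\compose X_{a_{\ell-1}}$ and $Q = X_{a_{1'}}\compose\cdots\compose X_{a_{j'-1}}$ for the two frozen arc-composites, the potential restricts on $\Sigma^2_{m,m'}$ to the bilinear form $-\Tr(P\compose X_b\compose Q\compose X_a)$ in the two free arrows $X_a$ and $X_b$; this is the source of the word \emph{quadratic}. By a Thom--Sebastiani computation, the rapid decay cohomology of a bilinear pairing is one-dimensional and supported in a single degree, contributing a shift by a power of $q$ determined by its rank $r=r(m,m')$, while the kernel directions carry trivial $W_\gamma$ and contribute full cohomology. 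Determining $r(m,m')$ as a function of the orbit types and recording the shift reassembles the surviving data into $\dprod_{\psi\in\Phi^2_\ell}\E(y_\psi)$; here the cross-arc ordering rule \eqref{eqn:neg.order.rule}, with $\lambda(\beta,\beta')\leq 0$ between roots of $A_\ell$ and of $A_j$, is precisely what makes the $\lambda$-twists in the quantum algebra agree with the quadratic $q$-shifts. Matching the two generating series then proves \eqref{eqn:main}.

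The main obstacle I anticipate is the quadratic case: computing the rank $r(m,m')$ of the bilinear form in terms of the Kostant partitions $m$ and $m'$, carrying out the Thom--Sebastiani bookkeeping $\G_\gamma$-equivariantly so that the $q$-shift interacts correctly with the orbit cohomology, and confirming the degeneration of the stratification spectral sequence so that no cancellation occurs. A useful independent check, and possibly a cleaner route to the bookkeeping, is to verify the resulting coefficient identity directly in $\hat{\A}_{\Gamma_n}$ by the dimension-counting manipulation of \eqref{eqn:E.defn}, comparing, for each $y_\gamma$, the signed $q$-weighted sums of products $\prod_\beta \curly{P}_{d_\beta}$ coming from the two sides.
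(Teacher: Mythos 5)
Your proposal is correct and follows essentially the same route as the paper: stratify $\Rep_\gamma(\Gamma_n)$ by the $W$-linear and $W$-quadratic strata, show the rapid decay cohomology of a $W$-linear stratum vanishes unless $m_{\beta_0}=0$ while a $W$-quadratic stratum contributes $\curly{P}(m)\curly{P}(m')$ shifted by the rank of the trace pairing in $(X_a,X_b)$ (the paper computes this shift as $\w(m,m')=m_{[1,\ell]}m'_{[1',j']}$), assemble these via a degenerating spectral sequence (the paper's Kazarian spectral sequence, Theorem \ref{thm:kss}) into a Poincar\'e series identity, and then match coefficients of $y_1^{\gamma(1)}\cdots y_n^{\gamma(n)}$ in $\hat{\A}_{\Gamma_n}$, where the ordering rules \eqref{eqn:Reineke.order} and \eqref{eqn:neg.order.rule} make the $\lambda$-twists reproduce exactly the topological $q$-shifts. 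The technical points you flag as obstacles are handled in the paper by the normal-locus reduction (Lemma \ref{lem:reduce.normal.form}) and by the explicit commutation formulas of \cite[Lemma~5.1]{rr2013}.
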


We interpret the common value of each side as the refined DT-invariant $\E_{Q,W}$. We remark the identity \eqref{eqn:main} still holds with $n=2$, but the $2$-cycle quiver $\Gamma_2$ is not a so-called \emph{cluster quiver} and $\E_{Q,W}$ is not defined in the sense of \cite{bk2013.fpsac}. In any event, when $n=2$ the identity says that $\E(y_{e_1})\E(y_{e_2}) = \E(y_{e_2})\E(y_{e_1})$ which reflects that the quantum algebra is commutative, a fact which holds for any symmetric quiver.

\section{Quiver mutation and maximal green sequences}
\label{s:mgs}

We provide a brief overview of quiver mutation and green sequences; for more details we refer the reader to \cite{bk2013.fpsac,tbgdmp2014} and references therein. Let $Q$ be a quiver without loops or $2$-cycles, aka a \emph{cluster quiver}. $(Q,F)$ is an \emph{ice quiver} if $F\subset Q_0$ (possibly $F=\emptyset$) is a subset of \emph{frozen vertices} with no arrows between them; we will not be allowed to mutate at frozen vertices. For a non-frozen vertex $i\in Q_0$, define the \emph{mutation of $Q$ at $i$} to be the ice quiver $(\mu_i(Q),F)$ where $\mu_i(Q)$ is obtained from $Q$ by the following steps:
	\begin{enumerate}[label=\arabic*.,leftmargin=*]
	\item For every $k \to i \to j$ (for any other $k,j\in Q_0$), add an arrow $k\to j$,
	\item reverse the orientation of any arrow incident with $i$, and
	\item remove a maximal collection of $2$-cycles \& any arrows between frozen vertices.
	\end{enumerate}
We let $\Mut(Q)$ denote the \emph{mutation class} of $Q$; these are all of the (ice) quivers which can be obtained by sequences of mutations of $Q$.

Given a cluster quiver $Q$ (with no frozen vertices), take a copy of the set of vertices, denote it by $\til{Q}_0$ (for $i\in Q_0$, the corresponding vertex in $\til{Q}_0$ is denoted $\til{i}$), and set $F = \til{Q}_0$. We form the \emph{framed quiver} $\hat{Q}$ with vertices $Q_0\sqcup \til{Q}_0$ and arrows the same as $Q$, except we add an arrow $i \to \til{i}$ for every $i\in Q_0$. 

Given $R\in \Mut(\hat{Q})$, a vertex $i\in R_0$ is \emph{green} if there are no arrows with head $i$ and tail in $\til{Q}_0=F$. The vertex $i$ is \emph{red} if there are no arrows from $i$ to a frozen vertex. A green sequence for $Q$ is a list $\bfs{i} = (i_1,\ldots,i_k)\subset Q_0$, such that $i_1$ is green in $\hat{Q}$ (actually observe that every non-frozen vertex of $\hat{Q}$ is green) and for all $2 \leq j \leq k$, the vertex $i_{j}$ is green in $\mu_{i_{j-1}} \cdots \mu_{i_1}(\hat{Q})$. The sequence $\bfs{i}$ is called a \emph{maximal green sequence} if every non-frozen vertex in $\mu_{\bfs{i}}(\hat{Q}):= \mu_{i_k}\cdots\mu_{i_1}(\hat{Q})$ is red. 

\begin{eg}
	\label{eg:mgs}
Let $Q = \Gamma_4$. We have the following maximal green sequences\footnote{which can be checked in Keller's quiver mutation Java applet \href{https://webusers.imj-prg.fr/~bernhard.keller/quivermutation/}{https://webusers.imj-prg.fr/\textasciitilde bernhard.keller/quivermutation/}} \[ (1,2,1,3,2,1,4,2,1),\quad (1,2,1,3,4,2,1), \quad (1,3,2,4,1,3) \]
of respective lengths $9$, $7$, and $6$. These are respectively the sizes of $\Phi^1$, $\Phi^2_3$, and $\Phi^2_2$, a connection which we now explain.
\end{eg}

Associated to a green sequence $\bfs{i}$, Keller \cite{bk2013.fpsac} defines a product in $\hat{\A}_{Q}$ as follows
	\[
	\E(\bfs{i}) := \E(y_{\delta_k}) \cdots \E(y_{\delta_1})
	\]
where $\delta_j= \sum_{i\in Q_0} b_{j,\tilde{i}} \, e_i$ and $b_{j,\tilde{i}}$ is the number of arrows $j \to \til{i}$ in $\mu_{i_{j-1}} \cdots \mu_{i_1}(\hat{Q})$ ($\delta_1 = e_{i_1}$). We remark that because our definition \eqref{eqn:E.defn} of $\E(z)$ differs from Keller's by the involution $q^{1/2} \mapsto -q^{-1/2}$, our convention \emph{reverses} the order of products in \emph{op.~cit.} When $\bfs{i}$ is maximal green, the \emph{refined DT-invariant} of the quiver (with potential) is $\E_{Q,W} := \E(\bfs{i})$; see \emph{op.~cit.}, \cite{tbgdmp2014}, and references therein. 
When $Q = \Gamma_n$, our topological/geometric proof of Theorem \ref{thm:main} gives several different factorizations of $\E_{Q,W}$ by using the $W$-linear stratification and the $W$-quadratic factorizations (one for each $\ell$). As such, we conjecture several connections to maximal green sequences.

First, recall our method for ordering roots by column aligning the $AR$ quiver(s) in Section \ref{s:ordering.roots}. Now, given an $AR$ diagram for any $A_d$ label each \emph{row} by the vertex $1,\ldots,d$ reading from bottom to top. We remark that in the $W$-quadratic case, this means rows for $A_\ell$ will be labeled $1$ to $\ell$, and rows for $A_j$ will be labeled by $1' = \ell +1$ to $j' = n$.

\begin{conj}
	\label{conj:mgs}
	(a) For any $\ell$, the righthand side of \eqref{eqn:main} is equivalent to $\E(\bfs{i})$ for a maximal green sequence $\bfs{i}$ obtained by recording the row numbers of nodes from right to left and bottom to top in the diagram with $AR(A_\ell)$ and $AR(A_j)$ aligned by columns as in Section \ref{s:ordering.roots}.
	
	(b) The lefthand side of \eqref{eqn:main} is equivalent to $\E(\bfs{i})$ for a maximal green sequence $\bfs{i}$ obtained by recording the row numbers of nodes in $AR(A_n)$ from right to left and bottom to top, except that $[1,n]$ is ignored, and the root $[2,n]$ is considered in row $n$ (not $n-1$).
\end{conj}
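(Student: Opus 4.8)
The plan is to prove both parts by reducing each to two sub-claims: (I) the row-reading recipe produces a genuine maximal green sequence $\bfs{i}$ for $\Gamma_n$, and (II) the vectors $\delta_j$ attached to $\bfs{i}$ by Keller's construction range, in order, exactly over the roots listed in the corresponding product of \eqref{eqn:main}. Granting (I) and (II), the factors $\E(y_{\delta_j})$ of $\E(\bfs{i}) = \E(y_{\delta_k})\cdots\E(y_{\delta_1})$ coincide term-by-term with $\dprod_{\psi}\E(y_\psi)$ (resp.~with $\dprod_{\phi}\E(y_\phi)$), so the conjectured equivalence holds. Note that by Theorem \ref{thm:main} the common value is already known to be $\E_{Q,W}$, so the real content lies in establishing (I) and in matching both the roots and their order in (II).

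For (II) I would track the framed quiver $\hat\Gamma_n$ through the prescribed mutations and compute the numbers $b_{j,\til i}$ counting arrows $j\to\til i$ at each step. The key is a recursive ``peeling'' argument organized by the columns of the aligned $AR$ diagram(s): reading right-to-left and bottom-to-top, each recorded vertex contributes one new $c$-vector $\delta_j$, and I would show by induction on the number of completed mutations that $\delta_j$ equals the root occupying that node. On each equioriented type-$A$ piece this reduces to the classical fact that the standard maximal green sequence of $A_d$ has $c$-vectors equal to the positive roots (the intervals $[k_1,k_2]$) in $AR$ order; the Reineke order of Section \ref{s:ordering.roots} and the order \eqref{eqn:neg.order.rule} are precisely the sign-coherence orders that make the factors line up, so the within-column freedom matches the commuting braces observed in Example \ref{eg:A2.A3}.

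The main obstacle is the cycle itself. Because $\Gamma_n$ is not acyclic, mutations interact non-locally through the back-arrow (the arrows $a,b$ of \eqref{eqn:Cn.w.quad.picture}), and one must mutate the quiver \emph{with potential} in the sense of Derksen--Weyman--Zelevinsky so that the relations $\partial_{a_i}W=0$ are respected. I would verify that along the prescribed order every two-cycle created by step~1 of mutation is genuinely cancelled against a term of the mutated potential, so that the surviving arrows among non-frozen vertices reassemble into the expected glued type-$A$ shape at each stage. In part~(b), the instruction to treat $[2,n]$ as lying in row $n$ rather than $n-1$ should fall out of precisely one such cancellation involving the back-arrow; isolating and bookkeeping this ``twist'' across the entire cyclic sequence is where I expect the analysis to be most delicate.

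Finally, for (I) greenness at each step would be established \emph{simultaneously} with (II): the same column-by-column induction that computes $\delta_j$ also shows the next vertex to be mutated has no incoming arrows from frozen vertices, hence is green, so no step stalls. For maximality, the lengths of the sequences equal $|\Phi^1|$ and $|\Phi^2_\ell|$ (as in Example \ref{eg:mgs}), the full count of positive roots of the associated type-$A$ system; by sign-coherence of $c$-vectors, a green sequence that has produced this many positive $c$-vectors must leave every non-frozen vertex red, giving maximality. Combining the column induction of (II) with this positivity and counting argument would complete both parts.
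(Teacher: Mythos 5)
You should first be aware that the paper does not prove this statement at all: it is stated as Conjecture~\ref{conj:mgs} and supported only by checked examples (Examples~\ref{eg:mgs}, \ref{eg:AR.mgs}, \ref{eg:A2.A3.mgs}), with the actual theorem of the paper, Theorem~\ref{thm:main}, proved by entirely different means (rapid decay cohomology and counting in $\hat{\A}_{\Gamma_n}$). So there is no paper proof to compare against; a completed version of your plan would be new mathematics. Your reduction to the two sub-claims (I) and (II) is the right skeleton, and reducing (II) to the known $c$-vector behavior of equioriented $A_d$ plus a cyclic correction is the natural line of attack. But as written the proposal is a programme, not a proof: the column-by-column induction is asserted rather than carried out, and the single hardest point --- the anomaly in part (b), where $[2,n]$ must be read in row $n$ rather than $n-1$ --- is exactly where the type-$A$ recursion breaks, and you flag it without supplying any mechanism for it.

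Two steps are concretely wrong or misdirected. First, your maximality argument fails: you claim that once the sequence has produced $|\Phi^1|$ (resp.\ $|\Phi^2_\ell|$) positive $c$-vectors, sign-coherence forces every non-frozen vertex to be red. But maximal green sequences for a fixed quiver do not have a fixed length --- the paper itself exhibits maximal green sequences for $\Gamma_4$ of lengths $6$, $7$, $8$, and $9$ --- so no count of completed green mutations can certify terminality; sign-coherence says each $c$-vector is positive or negative, not that a prescribed tally exhausts the green vertices. Maximality must be read off directly from the final ice quiver $\mu_{\bfs{i}}(\hat{Q})$, which your induction would in any case have to describe explicitly. Second, the appeal to Derksen--Weyman--Zelevinsky mutation of the potential is a red herring for (I) and (II): Keller's green-sequence combinatorics on the framed quiver $\hat{Q}$ (steps 1--3 of Section~\ref{s:mgs}) never consults $W$, since removal of a maximal collection of $2$-cycles is unconditional in this framework; the potential enters only in interpreting the common value as $\E_{Q,W}$. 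The genuine difficulty you must face instead is purely combinatorial: controlling, through the back-arrow of the cycle, the shape of the mutated framed quiver and the arrow counts $b_{j,\tilde{i}}$ at every step, including verifying greenness of the next vertex, and then matching the resulting order of factors against the braces of commuting roots as in Example~\ref{eg:A2.A3}. Note also that Theorem~\ref{thm:main} cannot substitute for any of this, since \eqref{eqn:main} equates the two explicit products but says nothing about the $c$-vectors of any mutation sequence.
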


\noindent In the statements above, equivalent means ``up to permutation of commuting neighboring factors''.


\begin{eg}
	\label{eg:AR.mgs}
Below are the diagrams corresponding to the maximal green sequences in Example \ref{eg:mgs}. We shorten the notation $[k_1,k_2]$ to $k_1k_2$ and write $4$ instead of $1'$ in the $\ell =3$ case, and $3$ (resp.~$4$) instead of $1'$ (resp.~$2'$) in the $\ell =2$ case.

\begin{center}
	\begin{tikzpicture}
	\node (44) at (0,1) {$44$};
	\node (33) at (1,1) {$33$};
	\node (22) at (2,1) {$22$};
	\node (11) at (3,1) {$11$};
	\node (34) at (.5,1.5) {$34$};
	\node (23) at (1.5,1.5) {$23$};
	\node (12) at (2.5,1.5) {$12$};
	\node (24) at (1,2.5) {$24$};
	\node (14) at (1.5,2.5) {$\cancel{14}$};
	\node (13) at (2,2) {$13$};
	
	\node (r1) at (4,1) {$1$};
	\node (r2) at (4,1.5) {$2$};
	\node (r3) at (4,2) {$3$};
	\node (r4) at (4,2.5) {$4$};
	
	\node (title) at (1.5,0) {$\Phi^1$};
	\node (row) at (4,0) {row};

	\draw[dashed] (-.5,1.25) -- (4.5,1.25);	
	\draw[dashed] (-.5,1.75) -- (4.5,1.75);	
	\draw[dashed] (-.5,2.25) -- (4.5,2.25);
	\draw[dashed] (.25,.5) -- (.25,2.75);	
	\draw[dashed] (.75,.5) -- (.75,2.75);	
	\draw[dashed] (1.25,.5) -- (1.25,2.75);	
	\draw[dashed] (1.75,.5) -- (1.75,2.75);	
	\draw[dashed] (2.25,.5) -- (2.25,2.75);	
	\draw[dashed] (2.75,.5) -- (2.75,2.75);	
	
	\draw (-.5,.5) -- (4.5,.5);
	\draw (3.5,-.5) -- (3.5,2.75);
	
	\draw[thick,->,red] (1 , 1.75) -- (1 , 2.25);
	\end{tikzpicture}
\end{center}

\begin{center}
\begin{minipage}{0.4\textwidth}
\begin{center}
	\begin{tikzpicture}
	\node (33) at (0,1) {$33$};
	\node (22) at (1,1) {$22$};
	\node (11) at (2,1) {$11$};
	\node (23) at (0.5,1.5) {$23$};
	\node (12) at (1.5,1.5) {$12$};
	\node (13) at (1,2) {$13$};
	\node (44) at (1,2.5) {$44$};
	
	\node (r1) at (3,1) {$1$};
	\node (r2) at (3,1.5) {$2$};
	\node (r3) at (3,2) {$3$};
	\node (r4) at (3,2.5) {$4$};
	
	\node (title) at (1,0) {$\Phi^2_3$};
	\node (row) at (3,0) {row};

	\draw[dashed] (-.75,1.25) -- (3.75,1.25);	
	\draw[dashed] (-.75,1.75) -- (3.75,1.75);	
	\draw[dashed] (-.75,2.25) -- (3.75,2.25);
	\draw[dashed] (.25,.5) -- (.25,2.75);	
	\draw[dashed] (.75,.5) -- (.75,2.75);	
	\draw[dashed] (1.25,.5) -- (1.25,2.75);	
	\draw[dashed] (1.75,.5) -- (1.75,2.75);	
	
	\draw (-.75,.5) -- (3.75,.5);
	\draw (2.5,-.5) -- (2.5,2.75);
	\end{tikzpicture}
\end{center}
\end{minipage}
\begin{minipage}{0.4\textwidth}
\begin{center}
	\begin{tikzpicture}
	\node (22) at (0,1) {$22$};
	\node (11) at (1,1) {$11$};
	\node (12) at (0.5,1.5) {$12$};
	
	\node (2'2') at (0,2) {$44$};
	\node (1'1') at (1,2) {$33$};
	\node (1'2') at (.5,2.5) {$34$};
	
	\node (r1) at (2,1) {$1$};
	\node (r2) at (2,1.5) {$2$};
	\node (r3) at (2,2) {$3$};
	\node (r4) at (2,2.5) {$4$};
	
	\node (title) at (.5,0) {$\Phi^2_2$};
	\node (row) at (2,0) {row};

	\draw[dashed] (-.5,1.25) -- (2.5,1.25);	
	\draw[dashed] (-.5,1.75) -- (2.5,1.75);	
	\draw[dashed] (-.5,2.25) -- (2.5,2.25);
	\draw[dashed] (.25,.5) -- (.25,2.75);	
	\draw[dashed] (.75,.5) -- (.75,2.75);	
	
	\draw (-.75,.5) -- (2.5,.5);
	\draw (1.5,-.5) -- (1.5,2.75);
	\end{tikzpicture}
\end{center}
\end{minipage}
\end{center}

\noindent The case $n=4$ was explored in \cite{jarr2018} by thinking of $\Gamma_4$ as the so-called \emph{square product} $A_2 \Box A_2$. The stratifications defined in that work correspond to the $\Phi^2_2$ case above, but the $\Phi^1$ and $\Phi^2_3$ cases above are new explicit factorizations of $\E_{Q,W}$.
\end{eg}

\begin{eg}
	\label{eg:A2.A3.mgs}
	Consider the diagram of Example \ref{eg:A2.A3}. Using that $4=1'$ and $5=2'$, this corresponds to the maximal green sequence $(1,4,2,5,1,3,4,2,1)$ for $\Gamma_5$.
\end{eg}

The \emph{No Gap Conjecture} \cite[Conjecture~2.2]{tbgdmp2014} states that the set of lengths of all possible maximal green sequences forms an interval of integers. This has been proven up to tame (acyclic) types \cite{shki2019}, but is still open for acyclic quivers, $\Gamma_n$ for example. There are maximal green sequences of length $8$ for $\Gamma_4$, e.g.~$(1, 2, 1, 3, 2, 4, 2, 1)$. Although this sequence cannot be achieved by Theorem \ref{thm:main} and Conjecture \ref{conj:mgs}, it fills in an interval $\{6,7,8,9\}$, and one can check there are no maximal green sequences for $\Gamma_4$ of length less than $6$. Motivated by our results, we conjecture an upper bound for such an interval.

\begin{conj}
	\label{conj:max.min.mgs}
	For $\Gamma_n$, the maximal length of a maximal green sequence is $\binom{n+1}{2} - 1 = |\Phi^1|$, and this is achieved by the sequence described in Conjecture \ref{conj:mgs}(b).
\end{conj}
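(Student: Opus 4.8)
The plan is to split Conjecture \ref{conj:max.min.mgs} into an \emph{achievability} claim (some maximal green sequence attains length $\binom{n+1}{2}-1$) and an \emph{upper bound} (no maximal green sequence is longer). The achievability half reduces to Conjecture \ref{conj:mgs}(b): if the explicit row-reading sequence for $AR(A_n)$ is a genuine maximal green sequence, then the product $\E(\bfs{i})$ it determines is the left-hand side of \eqref{eqn:main}, which has $|\Phi^1| = \binom{n+1}{2}-1$ factors, so its length is exactly $\binom{n+1}{2}-1$. I would prove Conjecture \ref{conj:mgs}(b) by induction on $n$, tracking the framed quiver $\hat{\Gamma}_n$ through the prescribed mutations: at each step record the exchange vector $\delta_j$, verify the freshly mutated vertex is green, and check the terminal quiver is all-red. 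Matching the $\delta_j$ term-by-term against $\Phi^1$ (via Theorem \ref{thm:main}) confirms both validity and length.

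For the upper bound I would pass to the Jacobian algebra $J(\Gamma_n,W)$. Computing the cyclic derivatives of $W=-a_1\cdots a_n$ shows that $\partial_{a_i}W$ is the length-$(n-1)$ path obtained by deleting $a_i$, so the relations kill every path of length $n-1$; hence $J(\Gamma_n,W)$ is the truncated cyclic Nakayama algebra $\Lambda_n$ with $\mathrm{rad}^{\,n-1}=0$. Being representation-finite, $\Lambda_n$ is $\tau$-tilting finite, so $\mathrm{tors}(\Lambda_n)$ is a finite lattice and, by the standard correspondence between maximal green sequences and maximal chains in the lattice of torsion classes, the length of a maximal green sequence equals the number of covering relations in its chain. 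Each cover carries a \emph{brick label}, and the labels along a single chain are pairwise distinct, so it suffices to bound by $\binom{n+1}{2}-1$ the number of bricks occurring along one maximal chain.

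The bricks of $\Lambda_n$ are exactly the $n(n-1)$ uniserial \emph{arc modules}, one for each set of between $1$ and $n-1$ consecutive vertices of the cycle; note that the all-ones vector $\beta_0=[1,n]$ is never a brick (it would require Loewy length $n$), which is precisely the one root deleted from $\Phi(A_n)$ to form $\Phi^1$. Cutting the cycle at $a_n$ partitions the arc modules into the $\binom{n+1}{2}-1$ arcs not crossing the cut, which are exactly the interval dimension vectors indexing $\Phi^1$, together with $\binom{n-1}{2}$ crossing arcs (e.g.\ the projective $(1,0,1)$ when $n=3$). Since the total $n(n-1)$ strictly exceeds the target, distinctness of labels alone does not suffice. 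Writing $N=\binom{n+1}{2}-1$ non-crossing and $C=\binom{n-1}{2}$ crossing arcs, the bound is equivalent to the sharp inequality that along any chain the number of crossing arcs used is at most the number of non-crossing arcs skipped; the approach I favor is to exhibit an injection from the crossing labels of a chain into its skipped non-crossing labels, i.e.\ a forbidden-configuration argument showing each crossing arc obstructs a non-crossing one.

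The main obstacle is exactly this combinatorial cap: the distinct-labels bound yields only $n(n-1)$, so the sharp constant $\binom{n+1}{2}-1$ must come from understanding which sets of bricks are \emph{chain-realizable} in $\mathrm{tors}(\Lambda_n)$. I expect two viable routes: a direct analysis of the Hasse quiver of $\mathrm{tors}(\Lambda_n)$ for the cyclic Nakayama algebra, bounding its longest directed path; or an induction passing from $\Gamma_n$ to $\Gamma_{n-1}$ that embeds $\mathrm{tors}(\Lambda_{n-1})$ in $\mathrm{tors}(\Lambda_n)$ and controls the covers introduced by the $n$-th vertex. In either route the delicate point is the bookkeeping of crossing arcs, whose mutual compatibility in chains is what separates the true maximum $\binom{n+1}{2}-1$ from the naive count $n(n-1)$; by contrast, once Conjecture \ref{conj:mgs}(b) is established the achievability half is routine.
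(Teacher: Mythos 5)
The statement you set out to prove is Conjecture~\ref{conj:max.min.mgs}: it is stated in the paper as an \emph{open conjecture}, motivated by (but not deduced from) Theorem~\ref{thm:main}, so there is no proof in the paper to compare against. Your proposal does not close it either; both halves have genuine gaps. For achievability, you correctly observe that it reduces to Conjecture~\ref{conj:mgs}(b), but that is itself unproven: Theorem~\ref{thm:main} only shows that the ordered product over $\Phi^1$ equals the DT-invariant $\E_{Q,W}$; it does \emph{not} certify that the row-reading sequence of mutations is green at every step, which is precisely the content of Conjecture~\ref{conj:mgs}(b) and is not a formal consequence of the identity \eqref{eqn:main} (an arbitrary factorization of $\E_{Q,W}$ into quantum dilogarithms need not come from a green sequence). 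Your ``induction on $n$ tracking the framed quiver'' is a plan, not an argument.

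For the upper bound, your algebraic setup is sound and is real added value relative to the paper: the cyclic derivatives of $W=-a_1\cdots a_n$ do kill all $n$ paths of length $n-1$, so the Jacobian algebra is the truncated cyclic Nakayama algebra with $\mathrm{rad}^{n-1}=0$; its indecomposables are the $n(n-1)$ arc modules, all bricks (each simple occurs at most once), and the all-ones vector $\beta_0$ supports no indecomposable, matching the deletion of $[1,n]$ from $\Phi(A_n)$. Two caveats: you must justify that Keller's combinatorial maximal green sequences for $\Gamma_n$ correspond to maximal chains in the lattice of torsion classes of \emph{this} Jacobian algebra, which requires non-degeneracy of $W$ (plausible via the surface model --- $(\Gamma_n,W)$ is the quiver with potential of the once-punctured $n$-gon, type $D_n$ --- but it must be cited or checked); and, as you yourself note, distinctness of brick labels only yields the bound $n(n-1)$, which exceeds the target $\binom{n+1}{2}-1$ by $\binom{n-1}{2}$ for every $n\geq 3$. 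The entire content of the conjecture lies in that gap, and your proposed injection from crossing-arc labels into skipped non-crossing labels is asserted, not constructed --- you concede it is ``the main obstacle.'' So the proposal is a reasonable research program whose reduction steps are largely correct, but it proves neither half of the statement; the conjecture remains open in your write-up exactly where it is open in the paper.
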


The remainder of the paper is dedicated to the proof of Theorem \ref{thm:main}.

\section{Rapid decay cohomology and quivers with potential}
\label{s:rdc}

\emph{Rapid decay cohomology} is defined for a pair $(X;f)$ where $f:X\to\C$ is a regular function on the variety (or manifold) $X$; see \cite{mkys2011,jarr2018} and references therein. Here, we describe rapid decay cohomology for the space $X = \Rep_\gamma(\Gamma_n)$ and $f = W_\gamma$. Because $W_\gamma$ is a $\G_\gamma$-invariant function, an \emph{equivariant rapid decay cohomology} $\HH^*_{\G_\gamma}(\Rep_\gamma(\Gamma_n);W_\gamma)$ can be defined as follows. Fix $t \in \R$ and let $S_t = \{X \in \Rep_\gamma(\Gamma_n) : \mathrm{Re}(W_\gamma) < t\}$. Then define
	\[
	\HH^*_{\G_\gamma}(\Rep_\gamma(\Gamma_n);W_\gamma) = 
		\lim_{t \to - \infty} 
			\HH^*_{\G_\gamma}\left(\Rep_\gamma(\Gamma_n),W_\gamma^{-1}(S_t)\right).
	\]
The relative cohomologies on the righthand side stabilize at finite $t$ \cite{mkys2011}. Furthermore, by restricting $W_\gamma$, we can similarly define, for any choices of $m$ and/or $m'$, the algebras
	\begin{equation}
		\label{eqn:rdc.strata.defn}
	\HH^*_{\G_\gamma}\left(\Sigma^1_m;W_\gamma | \Sigma^1_m\right)
		\quad\text{~and~}\quad
	\HH^*_{\G_\gamma}\left(\Sigma^2_{m,m'};W_\gamma | \Sigma^2_{m,m'}\right).
	\end{equation}
This section is dedicated to computing the algebras \eqref{eqn:rdc.strata.defn} (hence their Poincar\'e series). To reduce notation, we neglect to write when $W_\gamma$ is restricted to a subspace, which will be clear from context. We need the following technical lemma \cite[Lemma~8.6]{jarr2018}.

\begin{lem}
	\label{lem:reduce.normal.form}
	Let $G$ act on the space $X$. Suppose that $Y \subset X$ is a subspace with isotropy subgroup $G_Y$ and $Z\subset X$ is a $G$-invariant subspace. Further assume that (1) every $G$-orbit in $X$ intersects $Y$, and (2) if $g\in G$ is such that there exists $x\in Y$ with $g\cdot x\in Y$, then $g\in G_Y$. In this scenario, we have $\HH^*_G(X,Z) \isom \HH^*_{G_Y}(Y,Y\intersect Z)$. We call $Y$ a ``normal locus for $X$''. \qed
\end{lem}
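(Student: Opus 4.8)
The plan is to read the hypotheses on $Y$ as the statement that $Y$ is a \emph{slice} for the $G$-action, so that $X$ is identified $G$-equivariantly with the associated (balanced) product $G\times_{G_Y}Y := (G\times Y)/G_Y$, where $G_Y$ acts by $h\cdot(g,y)=(gh^{-1},h\cdot y)$ and $G$ acts by left multiplication on the first factor. Granting such an identification, the lemma reduces to the standard induction isomorphism $\HH^*_G(G\times_{G_Y}Y)\isom \HH^*_{G_Y}(Y)$ together with its relative analogue for pairs of spaces.

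First I would define $\varphi:G\times_{G_Y}Y\to X$ by $\varphi([g,y])=g\cdot y$. This is well defined because $gh^{-1}\cdot(h\cdot y)=g\cdot y$ for $h\in G_Y$, and it is manifestly $G$-equivariant for the left action. Surjectivity is exactly hypothesis (1): every orbit meets $Y$, so any $x\in X$ has the form $g\cdot y$ with $g\in G$, $y\in Y$. For injectivity, suppose $g_1\cdot y_1=g_2\cdot y_2$ with $y_1,y_2\in Y$; setting $g=g_2^{-1}g_1$ we have $y_1\in Y$ and $g\cdot y_1=y_2\in Y$, so hypothesis (2) forces $g\in G_Y$, whence $[g_1,y_1]=[g_2g,y_1]=[g_2,g\cdot y_1]=[g_2,y_2]$. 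Thus $\varphi$ is a continuous $G$-equivariant bijection. Next, since $Z$ is $G$-invariant, $g\cdot y\in Z$ if and only if $y\in Z$, so $\varphi$ restricts to a bijection $G\times_{G_Y}(Y\intersect Z)\to Z$, where $Y\intersect Z$ is $G_Y$-invariant because $G_Y$ preserves both $Y$ and $Z$.

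It then remains to run the two cohomological reductions. Assuming $\varphi$ is a homeomorphism of pairs, one gets $\HH^*_G(X,Z)\isom \HH^*_G\!\left(G\times_{G_Y}Y,\,G\times_{G_Y}(Y\intersect Z)\right)$ immediately, and the latter is isomorphic to $\HH^*_{G_Y}(Y,Y\intersect Z)$ by passing to Borel constructions: there is a homeomorphism $(G\times_{G_Y}Y)\times_G EG\homeo Y\times_{G_Y}EG$ via $[[g,y],e]\mapsto [y,g^{-1}e]$, and $EG$ with its restricted $G_Y$-action is a model for $EG_Y$, so this computes $\HH^*_{G_Y}(Y)$; the identical argument applied to the subspace handles the relative term. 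The main obstacle is the single topological point that $\varphi$ is a homeomorphism, i.e.\ that $\varphi^{-1}$ is continuous, since bijectivity alone does not guarantee this. In the case of interest $G=\G_\gamma$ is an algebraic group acting on an affine representation space and $Y$ is a linear normal form, so $\varphi$ is a morphism of varieties whose restriction to each orbit is the quotient $G\to G\cdot y\isom G/\mathrm{Stab}(y)$; local triviality of these orbit fibrations (equivalently, local sections for $G\to G/G_Y$, available for such groups) upgrades the continuous bijection to a homeomorphism. Alternatively, because only equivariant (Borel) cohomology is needed, it suffices to verify that the induced map of Borel constructions is a homotopy equivalence, which follows from the same slice/fibration structure and avoids asserting that $\varphi$ is a strict homeomorphism on the nose.
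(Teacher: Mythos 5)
Your proposal is correct, but note that this paper never proves the lemma at all: the \qed in the statement signals that it is imported verbatim from the author's earlier joint work (\cite[Lemma~8.6]{jarr2018}), so there is no in-paper argument to compare against. Your slice/induction argument is the standard proof of such statements and is, in substance, what underlies the cited lemma: the hypotheses (1) and (2) say exactly that the $G$-equivariant map $G\times_{G_Y}Y\to X$, $[g,y]\mapsto g\cdot y$, is a bijection carrying $G\times_{G_Y}(Y\intersect Z)$ onto $Z$, after which the induction isomorphism $\HH^*_G(G\times_{G_Y}Y,\,G\times_{G_Y}(Y\intersect Z))\isom\HH^*_{G_Y}(Y,Y\intersect Z)$ finishes the job. (The cited reference organizes this as a direct comparison of Borel constructions, $Y\times_{G_Y}EG\to X\times_G EG$, rather than factoring through the balanced product first; the two computations are the same.) You also correctly isolate the one genuinely delicate point, which the bare statement of the lemma glosses over: for an arbitrary topological group action, a continuous equivariant bijection need not be a homeomorphism, so some hypothesis of this kind is implicitly in force. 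Your resolution is the right one for how the lemma is used in Section~\ref{ss:rdc.wlin} and~\ref{ss:rdc.wquad}: there $G=\G_\gamma$ is a product of general linear groups, $X=\Sigma^1_m$ (resp.~$\Sigma^2_{m,m'}$) fibers $\G_\gamma$-equivariantly over the orbit $\curly{O}_m$, and $Y=\nu^1_m$ is an affine normal form, so $G\to G/G_Y\isom\curly{O}_m$ admits local sections and the bijection is a homeomorphism; alternatively, as you say, one only needs the induced map of Borel constructions to be a (weak) homotopy equivalence, which is all that Borel cohomology sees.
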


\subsection{$W$-linear case}
	\label{ss:rdc.wlin}
We give details for this case, but only sketch the methods in the $\Sigma^2_{m,m'}$ case, which follows the technique of \cite{jarr2018}. Fix a Kostant partition $m\kp\gamma$ for $A_n$ and a point $X_m \in \curly{O}_m\subset\Rep_\gamma(A_n)$. The value of $m_{\beta_0}$ is the rank of the linear map $(X_m)_{a_1} \compose \cdots \compose (X_m)_{a_{n-1}} : \C^{\gamma(n)} \to \C^{\gamma(1)}$, and we assume that the bases are chosen in each $\C^{\gamma(i)}$ so the matrix of this composition has the block form
	\[
			\left(
				\begin{array}{c|c}
						I_{m_{\beta_0}} &  0  \\
						\hline
						0 & 0
				\end{array}
			\right).
	\]
Define the \emph{normal locus} $\nu^1_m \subset \Rep_\gamma(\Gamma_n)$ to be $\left\{ (X_{a_i})_{i=1}^n : (X_{a_i})_{i=1}^{n-1} = X_m\right\}$. We have immediately from the definition that $\nu^1_m \subset \Sigma^1_m$. The hypotheses of Lemma \ref{lem:reduce.normal.form} apply to $G=\G_\gamma$, $X=\Sigma^1_m$, $Y=\nu^1_m$ (with $G_Y \isom \G_m$ as in Section \ref{s:quiver.preliminaries}), $Z = W_\gamma^{-1}(S_t)\intersect \Sigma^1_m := Z_m$. Therefore 
\[
\HH^*_{\G_\gamma}\left(\Sigma^1_m,Z_m \right) \isom \HH^*_{\G_m}\left(\nu^1_m, Z_m \intersect \nu^1_m \right).
\]
Set $w = \gamma(1)\gamma(n)$ and observe that $\nu^1_m \homeo \Hom(\C^{\gamma(1)},\C^{\gamma(n)}) \homeo \R^{2w}$ by identifying $Y\in \nu^1_m$ with its matrix along the arrow $a_n$. In the block form we write
	\[
		Y = \left(
				\begin{array}{c|c}
						Y_0 & A \\
						\hline
						B & C
				\end{array}
			\right)
	\]
where $Y_0$ is an $m_{\beta_0}\times m_{\beta_0}$ complex matrix. From this we have
\[
W_\gamma(Y) = -\Tr((X_m)_{a_1} \compose \cdots \compose (X_m)_{a_{n-1}}\compose Y) = -\Tr(Y_0),
\]
a linear function in the entries of $Y_0$ (hence our naming convention). Taking the real part of $W_\gamma$ is also linear, whence the pair $(\nu^1_m,Z_m\intersect \nu^1_m)$ is homotopy equivalent to one of
\[
\text{I.~}(\R^{2w},\H^{2w}) \text{~when $m_{\beta_0}\neq 0$},
	\quad{\text{or}}\quad
\text{II.~}(\R^{2w},\emptyset) \text{~when $m_{\beta_0} = 0$}.
\]
Fixing $t\ll 0$ so that the rapid decay cohomology stabilizes, we have
	\begin{enumerate}[label={\Roman*}.]
	\item $\HH^*_{\G_\gamma}\left(\Sigma^1_m;W_\gamma \right) \isom \HH^*_{\G_m}(\R^{2w},\H^{2w}) = 0$ since $\R^{2w} \hmtpc \H^{2w}$, or
	\item $\HH^*_{\G_\gamma}\left(\Sigma^1_m;W_\gamma  \right) \isom \HH^*_{\G_m}(\R^{2w},\emptyset) \isom \HH^*_{\G_m}(pt) = \HH^*(B\G_m)$,
	\end{enumerate}
the latter algebra appeared in Section \ref{s:quiver.preliminaries}. 
We remark that the vanishing of $\HH^*_{\G_\gamma}(\Sigma^1_m;W_\gamma)$ when $m_{\beta_0} \neq 0$ is the reason for our definition $\Phi^1 = \Phi(A_n)\setminus\{\beta_0\}$ in Section \ref{s:ordering.roots}.

\subsection{$W$-quadratic case}
	\label{ss:rdc.wquad}
Fix $m\kp \gamma|A_\ell$ and $m'\kp \gamma|A_{j}$. As in the previous subsection, we can use Lemma \ref{lem:reduce.normal.form} to reduce our computation to a normal locus. By appropriately choosing bases, an analogous argument to \cite{jarr2018} shows that restricting $W_\gamma$ to this normal locus is a \emph{quadratic} function (hence our naming convention). Furthermore, the homotopy arguments of \cite[Section~8]{jarr2018} can be adapted to yield isomorphisms for every $d$
\[
\HH^d_{\G_\gamma}(\Sigma^2_{m,m'};W_\gamma ) 
	\stackrel{\isom}{\longrightarrow}
		\HH^{d-2\w(m,m')}(B\G_m \times B\G_{m'})
\]
where $\w(m,m') = m_{[1,\ell]} m'_{[1',j']}$. Hence using \eqref{eqn:poincare.orbit}, a K\"unneth formula implies
\begin{equation}
	\label{eqn:W.quad.Poincare}
	\curly{P}[\HH^*_{\G_\gamma}(\Sigma^2_{m,m'};W_\gamma )]  
		 = q^{\w(m,m')} \curly{P}(m) \curly{P}(m').
	\nonumber
\end{equation}

\section{Kazarian spectral sequence, Poincar\'e series identities}
\label{s:KSS}
We now apply \cite[Theorem~9.2]{jarr2018} to the present context. Let $c(m)$ and $c({m,m'})$ respectively denote the complex codimensions of $\Sigma^1_m$ and $\Sigma^2_{m,m'}$ in $\Rep_\gamma(\Gamma_n)$.

\begin{thm}
	\label{thm:kss}
	There is a spectral sequence $E^{ij}_\bullet$ in rapid decay cohomology which converges to $\HH^{i+j}_{\G_\gamma}(\Rep_\gamma(\Gamma_n);W_\gamma)$, degenerates at the $E_1$ page, and has $E_1$ page (with sums over all $\Sigma^1_m$)
	\begin{equation}
		\label{eqn:kss.E1.Wlinear}
		E^{ij}_1 = 
		\Dirsum_{c(m)=2i} 
				\HH^j_{\G_\gamma}(\Sigma^1_m;W_\gamma)
		= \mathop{\Dirsum_{c(m)=2i,}}_{\text{and~} m_{\beta_0} = 0}
				\HH^j(B\G_m).
	\end{equation}
Moreover, for every $1\leq \ell <n$ there is another spectral sequence with the same properties as above except that the $E_1$ page is determined by taking the direct sum over all $\Sigma^2_{m,m'}$
	\begin{equation}
		\label{eqn:kss.E1.Wquad}
		E^{ij}_1 = 
		\Dirsum_{c(m,m')=2i} 
				\HH^{j}_{\G_\gamma}(\Sigma^2_{m,m'};W_\gamma)
		= \mathop{\Dirsum_{c(m,m')=2i}}
				\HH^{j-2\w(m,m')}(B\G_m \times B\G_{m'}).
	\end{equation}	
\end{thm}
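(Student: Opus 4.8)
The plan is to obtain both spectral sequences as instances of the general Kazarian construction of \cite[Theorem~9.2]{jarr2018}, whose input is a $\G_\gamma$-invariant stratification of a representation space together with a $\G_\gamma$-invariant regular function, and whose output is a spectral sequence in equivariant rapid decay cohomology converging to the rapid decay cohomology of the total space. First I would feed in the two decompositions of \eqref{eqn:stratifications}: the $W$-linear stratification $\bigsqcup_m \Sigma^1_m$ and, for each $\ell$, the $W$-quadratic stratification $\bigsqcup_{m,m'} \Sigma^2_{m,m'}$, together with the function $W_\gamma$. The only thing to check is that the hypotheses of that theorem are met, namely that the stratum closures are compatible with a filtration by codimension; this is automatic because each $\Sigma^1_m$ (resp.~$\Sigma^2_{m,m'}$) is $\G_\gamma$-invariant and, being pulled back from an orbit (resp.~pair of orbits) of the Dynkin subquiver, has closure equal to a union of strata of strictly larger codimension.

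Next I would identify the $E_1$ page. Filtering $\Rep_\gamma(\Gamma_n)$ by the unions of closed strata of bounded codimension produces a filtered complex in rapid decay cohomology whose associated graded pieces are the relative cohomologies of successive filtration steps. Excision together with the normal-bundle (Thom) identification collapses each such piece onto a single open stratum, so that the $E_1$ term in filtration degree $i$ is the direct sum of the rapid decay cohomologies $\HH^*_{\G_\gamma}(\Sigma^1_m;W_\gamma)$ over strata of the prescribed codimension, and similarly in the $W$-quadratic case. Substituting the computations of Section \ref{s:rdc} finishes the identification: in the $W$-linear case, Case~I forces the summand to vanish unless $m_{\beta_0}=0$, in which case it equals $\HH^*(B\G_m)$, yielding \eqref{eqn:kss.E1.Wlinear}; in the $W$-quadratic case the degree shift by $2\w(m,m')$ produces \eqref{eqn:kss.E1.Wquad}.

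Finally I would prove degeneration at $E_1$ by a parity argument. By \eqref{eqn:poincare.orbit} every $\HH^*(B\G_m)$, and likewise every $\HH^*(B\G_m \times B\G_{m'})$, is concentrated in even degree, being the cohomology of a classifying space of a product of groups $\GL(\cdot,\C)$; moreover the codimension shift and the shift by $2\w(m,m')$ are even. Hence every nonzero entry of the $E_1$ page sits in even total degree. Since the differential $d_r$ raises total degree by one, it necessarily maps into zero groups and therefore vanishes for all $r\geq 1$, so the spectral sequence degenerates at $E_1$.

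I expect the main obstacle to be verifying that the excision and normal-bundle identifications remain valid for \emph{relative} rapid decay cohomology taken with respect to the sublevel sets $S_t$ — that is, controlling the interaction of $W_\gamma$ with the codimension filtration so that the general machinery of \cite[Theorem~9.2]{jarr2018} applies verbatim. Once this is in place, the strata computations of Section \ref{s:rdc} supply all remaining input and the argument reduces to bookkeeping.
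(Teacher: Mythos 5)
Your proposal is correct and follows essentially the same route as the paper: the paper's proof simply declares the argument analogous to \cite[Theorem~9.1]{jarr2018} and plugs in the stratum computations of Sections \ref{ss:rdc.wlin} and \ref{ss:rdc.wquad}, which is exactly your plan of running the Kazarian filtration-by-codimension construction with excision/Thom identifications and then substituting those computations. Your closing parity argument (all nonzero $E_1$ entries sit in even total degree, so every $d_r$ vanishes) is the standard degeneration mechanism implicit in the cited theorem, so nothing essential is missing.
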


\begin{proof} The proof is analogous to that of \cite[Theorem~9.1]{jarr2018} except that the second equalities in \eqref{eqn:kss.E1.Wlinear} and \eqref{eqn:kss.E1.Wquad} are determined respectively in Sections \ref{ss:rdc.wlin} and \ref{ss:rdc.wquad}.
\end{proof}

\begin{cor}
	\label{cor:P.series.ident}
	Theorem \ref{thm:kss} implies the following identity of Poincar\'e series (for every $1\leq \ell < n$)
	\begin{equation}
		\label{eqn:P.series.ident}
		\mathop{\sum_{m \kp \gamma}}_{m_{\beta_0} = 0} q^{c(m)} \curly{P}(m) 
		= \curly{P}[\HH^*(\Rep_\gamma(\Gamma_n);W_\gamma)] 
		= \mathop{\sum_{m \kp \gamma|A_\ell}}_{m'\kp \gamma|A_{j}} q^{c(m,m')+\w(m,m')} \curly{P}(m)\curly{P}(m')
	\nonumber
	\end{equation}
where the sum on the left is over Kostant partitions for $A_n$ quiver orbits, and the sum on the right is over pairs of Kostant partitions for $A_\ell$ and $A_{j}$ quiver orbits. \qed
\end{cor}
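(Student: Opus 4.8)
The plan is to read off the claimed Poincar\'e series identity directly from the degeneration statement in Theorem~\ref{thm:kss}, so that the only genuine work is bookkeeping the $q$-gradings. The general fact I would invoke first is that when a cohomological spectral sequence degenerates at $E_1$, there are no later differentials and hence no cancellation, so for each total degree $k$ the dimension of the abutment in degree $k$ equals $\sum_{i+j=k}\dim E^{ij}_1$. Passing to generating functions in $q^{1/2}$ weighted by total cohomological degree, this says precisely that $\curly{P}[\HH^*_{\G_\gamma}(\Rep_\gamma(\Gamma_n);W_\gamma)]$ equals the graded total dimension of the entire $E_1$ page. Since the abutment makes no reference to a stratification, both $E_1$ pages in Theorem~\ref{thm:kss}, the $W$-linear one and the $W$-quadratic one for each fixed $\ell$, have the same graded total dimension; this already forces the outer two expressions to agree, and the middle term is by definition this common value.

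Next I would evaluate each $E_1$ page. For the $W$-linear side, the second equality in \eqref{eqn:kss.E1.Wlinear} identifies the contributing summands as $\HH^j(B\G_m)$ for exactly those $m\kp\gamma$ with $m_{\beta_0}=0$, the vanishing for $m_{\beta_0}\neq 0$ having been established in Section~\ref{ss:rdc.wlin}. A stratum $\Sigma^1_m$ of complex codimension $c(m)$ enters the spectral sequence shifted by its real codimension $2c(m)$, which in the Poincar\'e series contributes a prefactor $q^{c(m)}$, while the remaining factor $\sum_j q^{j/2}\dim\HH^j(B\G_m)$ is exactly $\curly{P}(m)$ by \eqref{eqn:poincare.orbit}. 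Summing over the strata of each codimension yields the left-hand expression $\sum_{m\kp\gamma,\,m_{\beta_0}=0} q^{c(m)}\curly{P}(m)$.

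For the $W$-quadratic side I would argue identically, now using \eqref{eqn:kss.E1.Wquad} together with the computation of Section~\ref{ss:rdc.wquad}: the summand attached to $\Sigma^2_{m,m'}$ is $\HH^*(B\G_m\times B\G_{m'})$ placed in a degree shifted by $2\w(m,m')$, contributing the extra factor $q^{\w(m,m')}$, while the codimension shift contributes $q^{c(m,m')}$ and the K\"unneth factorization supplies $\curly{P}(m)\curly{P}(m')$. Summing over all pairs $(m,m')$ produces the right-hand expression. Equating the two summed forms against the common abutment completes the proof, and the independence of $\ell$ is automatic since the middle term never mentions $\ell$.

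The step I expect to be most delicate is notational rather than conceptual: keeping the three independent sources of $q$-powers straight, namely the codimension shift ($q^{c(m)}$ or $q^{c(m,m')}$), the rapid-decay degree shift in the $W$-quadratic case ($q^{\w(m,m')}$), and the internal grading of $\HH^*(B\G_m)$ that assembles into $\curly{P}(m)$. Once the convention that real codimension contributes $q^{(\text{real codim})/2}$ is fixed consistently with the definition $\curly{P}[H^*]=\sum_k q^{k/2}h_k$, the identity becomes a formal consequence of $E_1$-degeneration and requires no further geometric input.
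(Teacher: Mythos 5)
Your proposal is correct and matches the paper's own (implicit) argument: the paper marks the corollary with \qed precisely because it follows from Theorem \ref{thm:kss} by the bookkeeping you describe --- degeneration at $E_1$ over field coefficients makes graded dimensions additive, the Thom-type codimension shift supplies $q^{c(m)}$ (resp.\ $q^{c(m,m')}$), the rapid-decay degree shift supplies $q^{\w(m,m')}$, and K\"unneth supplies $\curly{P}(m)\curly{P}(m')$, with the two stratifications equated through the common abutment. No gaps; your reading of the grading conventions is the one consistent with the stated identity.
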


\section{Counting in $\hat{\A}_{\Gamma_n}$ and proof of the main theorem}
\label{s:count.qalg}
Throughout the section, set $y_{e_i} = y_i$ for any vertex $i$ of $\Gamma_n$.

\subsection{$W$-linear strata}
Set $N = |\Phi^1| = \frac{1}{2}n(n+1)-1$, and write $\phi_1 \prec \cdots \prec \phi_N$ as a total order described in Section \ref{s:ordering.roots} for $\Phi^1$. Let $m\kp \gamma$ and let $m_u$ denote the entry of $m$ corresponding to $\phi_u \in \Phi^1$ (by design $m_{\beta_0} = 0$). Moreover, write $\phi_u = \sum_{i=1}^n d^i_u e_i$. Then \cite[Lemma~5.1]{rr2013} implies that in $\hat{\A}_{\Gamma_n}$ we have
	\begin{equation}
		\label{eqn:Wlinear.side.quantum.algebra}
		\begin{gathered}
		y_{\phi_1}^{m_1} \cdots y_{\phi_N}^{m_N} 
				= (-1)^{s_1}\cdot q^{p_1}\cdot q^{-\gamma(1)\gamma(n)} \cdot 
				y_{1}^{\gamma(1)} \cdots y_{n}^{\gamma(n)}, 
					\quad \text{where} \\
		s_1 = {\textstyle \sum_{u=1}^N m_u(\sum_{i=1}^n d^i_u - 1)} 
				\quad \text{and} \quad
		p_1 = {\textstyle c(m) 
			+ \frac{1}{2}\sum_{i=1}^n \gamma(i)^2 
			- \frac{1}{2}\sum_{u=1}^N m_u^2}.
		\end{gathered}	
	\end{equation}
The extra factor of $q^{-\gamma(1)\gamma(n)}$ compared to \cite[Lemma~5.1]{rr2013} results from commuting contributions of $e_n$ in positive roots $[k_1,n]$ past those of $e_1$ in roots $[1,k_2]$ (for $k_1>1$, $k_2<n$) since $[k_1,n] \prec [1,k_2]$ in our order.

\subsection{$W$-quadratic strata}
Set $L = |\Phi(A_\ell)|$, $J=|\Phi(A_j)|$, and $N' = L+J = |\Phi^2_\ell|$. Write a total order $\psi_1\prec \cdots \prec\psi_{N'}$ for $\Phi^2_\ell$ satisfying the conditions in Section \ref{s:ordering.roots}. Let $m_{\psi_i}$ denote the entry, either in $m \kp \gamma|A_\ell$ or in $m'\kp \gamma|A_j$, for the root $\psi_i\in\Phi^2_\ell$. In abuse of notation, we also write $\beta_1 \prec \cdots \prec\beta_L$ as a Reineke order on $\Phi(A_\ell)\subset \Phi^2_\ell$, $\beta'_1\prec \cdots \prec\beta'_J$ as a Reineke order on $\Phi(A_j) \subset \Phi^2_\ell$, and denote the entries from $m$ and $m'$ corresponding to $\beta_u$ and $\beta'_v$ respectively by $m_u$ and $m'_v$. Then define $p'$ by
\begin{equation}
	\label{eqn:Wquad.separate}
	y_{\psi_1}^{m_{\psi_1}} \cdots y_{\psi_{N'}}^{m_{\psi_{N'}}}
		= q^{p'}	\left( y_{\beta_1}^{m_1} \cdots y_{\beta_L}^{m_L} \right) 
					\left( y_{\beta'_1}^{m'_1} \cdots y_{\beta'_J}^{m'_J} \right)
\end{equation}
and calculation reveals that $p' = -\gamma(1)\gamma(n) + \w(m,m')$. Again using \cite[Lemma~5.1]{rr2013}, it follows that \eqref{eqn:Wquad.separate} is further equal to
\begin{multline}
	\label{eqn:Wquad.side.quantum.algebra}
	= (-1)^{s_2}\, q^{p_2}\, q^{p'} 
		\left( y_1^{\gamma(1)} \cdots y_\ell^{\gamma(\ell)} \right) 
		\left( y_{1'}^{\gamma(1')} \cdots y_{j'}^{\gamma(j')} \right) \\
	= (-1)^{s_2}\, q^{p_2}\, q^{p'}\,
		y_1^{\gamma(1)} \cdots y_n^{\gamma(n)}
\end{multline}
with $s_2 = \sum_{u=1}^{N'} m_{\psi_u}(d^i_{\psi_u}-1)$ and $p_2 = c(m,m') + \frac{1}{2} \sum_{i=1}^n \gamma(i)^2 - \frac{1}{2}\sum_{u=1}^{N'} m_{\psi_u}^2$. In the calculation of $p_2$, we have used that
\begin{align*}
c(m,m') &:= \codim_\C(\Sigma^2_{m,m'},\Rep_\gamma(\Gamma_n)) \\
	& = \codim_\C(\curly{O}_m,\Rep_{\gamma|A_\ell}(A_\ell))
		+ \codim_\C(\curly{O}_{m'},\Rep_{\gamma|A_j}(A_j)).
\end{align*}
The above equality follows from the fact that, in the definition \eqref{eqn:defn.w.quad.stratum} of $\Sigma^2_{m,m'}$, no requirements are placed on the maps along the arrows $a$ and $b$ in the picture \eqref{eqn:Cn.w.quad.picture}.

\begin{proof}[Proof of Theorem \ref{thm:main}]
We compute the coefficients of $y_1^{\gamma(1)}\cdots y_n^{\gamma(n)}$ on each side of \eqref{eqn:main}. On the lefthand side, we need to consider
	\begin{multline}
		\label{eqn:lhs1}
		\mathop{\sum_{m \kp \gamma}}_{m_{[1,n]} = 0}
			(-1)^{\sum_{u=1}^N m_u} \, q^{\frac{1}{2}\sum_{u=1}^N m_u^2} \,
			y_{\phi_1}^{m_1} \cdots y_{\phi_N}^{m_N} \, \curly{P}(m) \\
		= (-1)^{\sum_{i=1}^n \gamma(i)} \, 
			q^{\frac{1}{2}\sum_{i=1}^n \gamma(i)^2 - \gamma(1)\gamma(n)} \,
			\mathop{\sum_{m \kp \gamma}}_{m_{[1,n]} = 0}
				q^{c(m)} \curly{P}(m) \,
			y_1^{\gamma(1)} \cdots y_n^{\gamma(n)}		
	\end{multline}
where we used \eqref{eqn:Wlinear.side.quantum.algebra} to obtain the second expression. On the righthand side we have
	\begin{multline}
		\label{eqn:rhs1}
		\mathop{\sum_{m\kp\gamma|A_\ell}}_{m'\kp \gamma|A_j}
			(-1)^{\sum_{u=1}^{N'} m_{\psi_u}}\,
			q^{\frac{1}{2}\sum_{u=1}^{N'} m_{\psi_u}^2}\,
			y_{\psi_1}^{m_1} \cdots y_{\psi_{N'}}^{m_{N'}} \, \curly{P}(m) \curly{P}(m') \\
		= (-1)^{\sum_{i=1}^n \gamma(i)} \, 
			q^{\frac{1}{2}\sum_{i=1}^n \gamma(i)^2 - \gamma(1)\gamma(n)} \\
			\times \mathop{\sum_{m\kp\gamma|A_\ell}}_{m'\kp \gamma|A_j}
						q^{c(m,m') + \w(m,m')} \curly{P}(m) \curly{P}(m') \,
								y_1^{\gamma(1)} \cdots y_n^{\gamma(n)}
	\end{multline}
where the second expression follows from \eqref{eqn:Wquad.side.quantum.algebra}. Finally, Corollary \ref{cor:P.series.ident} implies that \eqref{eqn:lhs1} and \eqref{eqn:rhs1} are the same.
\end{proof}

\acknowledgements{We acknowledge partial support from an Office of Naval Research NARC grant.}


\bibliographystyle{plainurl}
\bibliography{jmabib}

\end{document}